\newtheoremstyle{plain}%
  {8pt plus2pt minus4pt}%
  {8pt plus2pt minus4pt}%
  {\itshape}%
  {}%
  {\bfseries\scshape}%
  {}%
  {6pt}
  {}%
\newtheoremstyle{remark}%
  {8pt plus2pt minus4pt}%
  {8pt plus2pt minus4pt}%
  {\upshape}
  {}%
  {\bfseries\scshape}%
  {}%
  {6pt}
  {}%
\theoremstyle{plain}
\newtheorem{thm}{Theorem}[section]
\newtheorem{clm}[thm]{Claim}
\theoremstyle{remark}
\newtheorem{rem}[thm]{Remark}
\newcommand{\e}{\varepsilon}
\newcommand{\eps}{\varepsilon}
\def\l{\lambda}
\def\r{\rho}
\def\a{\alpha}
\def\om{\omega}
\newcommand{\brac}[1]{\left(#1\right)}
\newcommand{\bfrac}[2]{\brac{\frac{#1}{#2}}}
\newcommand{\set}[1]{\left\{#1\right\}}
\title{Flips in Graphs}
\author{Tom Bohman\thanks{
\footnotesize {Department of Mathematical Sciences, Carnegie Mellon University, Pittsburgh, PA 15213, 
\texttt{\{tbohman,adudek,af1p,pikhurko\}@andrew.cmu.edu}}}~\thanks{\footnotesize{Research partially 
supported by NSF grant DMS-0401147}}
\and
Andrzej Dudek\footnotemark[1]
\and
Alan Frieze\footnotemark[1]~\thanks{\footnotesize{Research partially supported by NSF grant DMS-0753472}}
\and
Oleg Pikhurko\footnotemark[1]~\thanks{\footnotesize{Research partially supported by NSF grant DMS-0457512}}
}
\begin{document}
\maketitle

\begin{abstract}
We study a problem motivated by a question related to quantum-error-correcting codes.
Combinatorially, it involves the following graph parameter:
$$f(G)=\min\set{|A|+|\{x\in V\setminus A : d_A(x)\text{ is odd}\}| : A\neq\emptyset},$$
where $V$ is the vertex set of $G$ and $d_A(x)$ is the number of neighbors of~$x$ in~$A$.
We give asymptotically tight estimates of~$f$ for the random graph $G_{n,p}$ when $p$ is constant. Also,
if
$$f(n)=\max\set{f(G):\;|V(G)|=n}$$
then we show that $f(n)\leq (0.382+o(1))n$.
\end{abstract}

\section{Introduction}
In this paper we consider a problem which is motivated by a question from quantum-error-correcting codes. To see how to use graphs to construct quantum-error-correcting codes see, \textit{e.g.},  \cite{HDERNB, LYGG, YCO}.

Given a graph $G$ with $\pm1$ signs on vertices, each vertex can perform at most
one of the following three operations: $O_1$ (flip all of its neighbors, \textit{i.e.}, change
their signs), $O_2$ (flip itself), and $O_3$ (flip itself and all of its
neighbors). We want to start with all $+1$'s, execute some non-zero number of operations and return to all $+1$'s. 
The \emph{diagonal distance} $f(G)$ is the minimum
number of operations needed (with each vertex doing at most one operation).

Trivially, 
\begin{equation}\label{trivial}
f(G)\le \delta(G)+1
\end{equation}
holds, where $\delta(G)$ denotes the minimum degree. Indeed, a vertex with the minimum degree applies $O_1$ and then its neighbors fix 
themselves applying $O_2$. 
Let $$f(n) = \max f(G),$$ where the maximum is taken over all non-empty graphs of order~$n$. 
Shiang  Yong Looi (personal communication) asked for a good approximation on $f(n)$.

In this paper we asymptotically determine the diagonal distance of the random graph $G_{n,p}$ for any $p\in(0,1)$. 

We denote the \textit{symmetric difference} of two sets~$A$ and~$B$ by $A\bigtriangleup B$ and the \textit{logarithmic function} with base~e as~$\log$. 
\begin{thm}\label{thm:1}
There are absolute constants $\lambda_0\approx 0.189$ and $p_0\approx 0.894$, see~\eqref{const:lambda} and~\eqref{const:p}, such that for 
$G=G_{n,p}$ asymptotically almost surely:
\begin{enumerate}[(i)]
\item $f(G) = \delta(G)+1$ for $0<p<\lambda_0$ or $p=o(1)$,
\item $|f(G) - \lambda_0 n|=\tilde{O}(n^{1/2})$ for  $\lambda_0\le p\le p_0$,
\item $f(G)=2+\min_{x,y\in V(G)} \left|\left(N(x)\bigtriangleup N(y)\right)\setminus\{x,y\}\right|$ for $p_0<p<1$ or $p=1-o(1)$.
\end{enumerate}
\end{thm}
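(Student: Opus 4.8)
The plan is to reduce everything to the combinatorial formula $f(G)=\min_{A\neq\emptyset}\big(|A|+|\mathrm{odd}_A|\big)$, where $\mathrm{odd}_A=\{x\in V\setminus A:d_A(x)\text{ is odd}\}$, and to exploit one clean distributional fact. Fix $A$ with $|A|=k$. For distinct $x\in V\setminus A$ the parities of $d_A(x)$ depend on disjoint sets of potential edges, so the indicators $\mathbf{1}[d_A(x)\text{ odd}]$ are independent, each equal to $1$ with probability $q_k=\tfrac12\big(1-(1-2p)^{k}\big)$; hence $|\mathrm{odd}_A|\sim\mathrm{Bin}(n-k,q_k)$ for every fixed $A$ of size $k$. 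The whole problem then becomes: for each $k$, understand the minimum of these $\binom nk$ (correlated) binomials, and minimize $k+|\mathrm{odd}_A|$ over $k$. The decisive structural input is the monotonicity of $q_k$: for $p<\tfrac12$ it increases in $k$, so $q_1=p$ is smallest, whereas for $p>\tfrac12$ it is minimized at the least even value $k=2$, where $q_2=2p(1-p)$. This predicts the optimal set size in each regime and locates the transitions: $\lambda_0$ is the least cost obtainable from large sets (for which $q_k\approx\tfrac12$ for every constant $p$), and $p_0$ is determined by $2p_0(1-p_0)=\lambda_0$, the point at which the size-$2$ solution overtakes the large-set one.

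For the lower bounds I would run a first-moment/union bound. For a target cost $T$,
$$\Pr\big(\exists A,\ |A|=k,\ |A|+|\mathrm{odd}_A|\le T\big)\le\binom nk\,\Pr\big(\mathrm{Bin}(n-k,q_k)\le T-k\big),$$
and I would bound the binomial lower tail by the relative-entropy estimate and sum over $k$. Writing $k=\beta n$ and $T=\tau n$, the bound is $o(1)$ provided $H(\beta)<(1-\beta)\,D\big(\tfrac{\tau-\beta}{1-\beta}\,\|\,q_{\beta n}\big)$, and optimizing over $\beta$ with $q_{\beta n}\approx\tfrac12$ isolates the smallest achievable cost $\lambda_0 n$; this optimization is what yields the constant $\lambda_0$. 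In regime (i) the same computation shows every set of size $\ge2$ costs more than $\delta+1$: for $k=o(n)$ the inequality $q_k\ge q_2>q_1=p$ forces the mean to exceed $np$ by $\Theta(n)$, which swamps the at most $\tilde O(\sqrt n)$ downward fluctuation of the minimum, while for $k=\Theta(n)$ the entropy bound gives cost $\ge\lambda_0 n>np$ since $p<\lambda_0$. In regime (iii) the optimization instead shows every $k\neq2$ has cost exceeding the size-$2$ value $\approx 2p(1-p)n$.

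The upper bounds require exhibiting good sets. Regime (i) is immediate from the trivial bound $f(G)\le\delta(G)+1$. For regime (iii) the asserted value is precisely the size-$2$ cost, so I must show $\min_{x,y}\big|(N(x)\bigtriangleup N(y))\setminus\{x,y\}\big|$ is attained near its first-moment prediction; as there are only $\binom n2$ pairs, weakly correlated, a second-moment argument over pairs produces a matching set, and combined with the size-$2$ lower bound this yields the exact identity. For regime (ii) I must produce a set $A$ of the optimal size $\beta^\ast n$ with $|\mathrm{odd}_A|$ within the first-moment threshold; because the $\binom{n}{\beta^\ast n}$ candidate sets overlap heavily, a naive second moment is unreliable, so I would instead use an alteration scheme (select a promising $A$, then delete or repair the few excess odd vertices) or a second-moment computation restricted to a sparse, nearly independent subfamily of sets.

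The main obstacle is regime (ii), and specifically matching the two bounds to within the claimed $\tilde O(n^{1/2})$. This needs more than the exponential-order large-deviation rate: I would require the second-order (local-limit) refinement of the binomial tail, so that the first-moment threshold for the cost is pinned down to $O(\sqrt n)$ rather than $O(n)$, a proof that $\min_{|A|=k}|\mathrm{odd}_A|$ concentrates on that same scale, and an argument that minimizing over the $\Theta(n)$-wide window of relevant $k$ costs only a further $\mathrm{polylog}$ factor. Handling the strong correlations among linear-sized sets on the existence side while keeping the error at the $\sqrt n$ scale is the crux; by contrast the small-$k$ regimes (i) and (iii) are comparatively routine once the binomial reduction and the monotonicity of $q_k$ are in hand.
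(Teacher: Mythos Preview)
Your reduction and lower-bound strategy are exactly those of the paper: the same binomial law for $|\mathrm{odd}_A|$ with parameter $q_k=\tfrac12(1-(1-2p)^k)$, the same first-moment/union bound summed over $k$, and the same entropy optimization producing $\lambda_0$ (the paper writes the exponent as $g_\lambda(\alpha)=H(\alpha)+(1-\alpha)\big(H(\tfrac{\lambda-\alpha}{1-\alpha})-1\big)$ and locates its maximum at $\alpha=2\lambda/3$). Regimes (i) and (iii) are also handled as you outline, with two minor corrections. In (iii) no second moment over pairs is needed: any single fixed pair already gives $|N(x)\bigtriangleup N(y)|\le (2p(1-p)+o(1))n$ by Chernoff, and that value alone serves as the target for the union bound ruling out $|A|=1$ and $|A|\ge 3$; since the asserted statement is an identity, where the minimum over pairs actually sits is irrelevant. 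In (i) with $p=o(1)$ your claim that ``the mean exceeds $np$ by $\Theta(n)$'' breaks down, and the paper treats this range separately, in the very sparse window invoking a connectedness argument on minimal bad sets $A$.

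The substantive divergence is the upper bound in (ii). You judge the full second moment over all $\binom{n}{a}$ sets to be unreliable and propose alteration or a sparse subfamily instead; the paper runs precisely that full second moment, with $a=\lfloor 2\lambda_0 n/3\rfloor$ and threshold $(\lambda_0+\e)n$ for $\e=(\log n)^4/\sqrt n$, and controls $\mathrm{Var}(X)$ by a three-way split of the pairs $(A,C)$: either $|A\bigtriangleup C|<2\sqrt n$ (too few such pairs), or $|A\bigtriangleup C|\ge 2\sqrt n$ and $|A\cap C|\ge\sqrt n$ (enough unexposed edges into $A\setminus C$ and into $A\cap C$ to make $X_A,X_C$ essentially independent, so $\mathrm{Cov}(X_A,X_C)=o(\Pr(X_A=1)^2)$), or $|A\cap C|<\sqrt n$ (again too few pairs, handled via a small auxiliary set $S\subset A\setminus C$). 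You also overstate the precision required for the $\tilde O(n^{1/2})$ window: no local-limit refinement is used anywhere, only the first-order expansion $g_{\lambda_0+\e}(2\lambda_0/3)=0+c\e+O(\e^2)$, which already yields $E(X)=e^{\Omega(\e n)}=e^{\Omega((\log n)^4\sqrt n)}$ and lets Chebyshev close the argument.
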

\noindent
(Here $\tilde{O}(n^{1/2})$ hides a polylog factor).

Figure~\ref{fig:p} visualizes the behavior of the diagonal distance of~$G_{n,p}$.
In addition to Theorem~\ref{thm:1} we find the following upper bound on~$f(n)$.
\begin{thm}\label{thm:2}
$f(n)\le (0.382+o(1))n$.
\end{thm}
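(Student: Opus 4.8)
The plan is to combine the elementary degree bound with a linear-algebra argument, after recasting $f$. Writing $\mathrm{Ev}(A)=|\set{x\in V\setminus A:d_A(x)\text{ even}}|$, the partition of $V\setminus A$ into odd and even vertices gives $|A|+|\set{x\in V\setminus A:d_A(x)\text{ odd}}|+\mathrm{Ev}(A)=n$, hence
\[
f(G)=n-\max_{A\neq\emptyset}\mathrm{Ev}(A),
\]
so Theorem~\ref{thm:2} is equivalent to the assertion that every $G$ has a nonempty $A$ with $\mathrm{Ev}(A)\ge(0.618-o(1))n$. Sparse graphs are immediate: if $\delta(G)\le 0.382\,n$ then a single minimum-degree vertex gives $\mathrm{Ev}(\set{v})=n-1-\delta(G)\ge 0.618\,n-1$ by~\eqref{trivial}, so I may assume $\delta(G)>0.382\,n$, i.e.\ $G$ is dense.

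Next I would pass to $\mathbb{F}_2$. Let $M$ be the adjacency matrix and $a=\mathbf 1_A$; then the charged set is exactly $\mathrm{supp}(a)\cup\mathrm{supp}(Ma)$, so
\[
f(G)=\min\set{|W|:\ \exists\,a\neq 0 \text{ with } \mathrm{supp}(a)\cup\mathrm{supp}(Ma)\subseteq W}.
\]
Thus it suffices to find a coordinate set $W$ with $|W|\le 0.382\,n$ carrying a nonzero $a$ for which $Ma$ is again supported on $W$. The obstacle that every naive approach runs into is a clean $\tfrac12$ barrier. Indeed the space $\set{a:\mathrm{supp}(a)\subseteq W,\ \mathrm{supp}(Ma)\subseteq W}$ has dimension at least $2|W|-n$, so it is nonzero as soon as $|W|>n/2$; this proves $f(G)\le\lfloor n/2\rfloor+1$ for every graph, and it is matched by all the soft bounds — a uniformly random $A$ has $\mathbb{E}\,\mathrm{Ev}(A)\approx n/2$, and any pair satisfies $|N(u)\bigtriangleup N(v)|\approx 2\gamma(1-\gamma)n$ on average with $\gamma=\delta/n$, which is again $\ge n/2$ at $\gamma=\tfrac12$. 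For a quasirandom graph of density $\tfrac12$ none of these can be pushed below $n/2$, so the whole content of Theorem~\ref{thm:2} is to beat this barrier.

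The key structural fact I would exploit is that $M$ is symmetric. Writing $V_S$ for the coordinate subspace on $S$ and using $M=M^{T}$, one has $\set{a:\mathrm{supp}(a)\cup\mathrm{supp}(Ma)\subseteq W}=(V_{\overline W}+MV_{\overline W})^{\perp}$, so a cheap $W$ exists precisely when the $(n-|W|)$-dimensional coordinate subspace $V_{\overline W}$ fails to be in general position with respect to the symmetric form $M$, i.e.\ $\dim(V_{\overline W}+MV_{\overline W})<n$. The generic count gives this only for $|W|>n/2$, so the gain to $0.382\,n$ must come from choosing the coordinate set $\overline W$ (of size $\ge 0.618\,n$) so that $V_{\overline W}$ and $MV_{\overline W}$ overlap more than generically — an isotropy phenomenon for $M$ restricted to coordinate subspaces. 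I would attempt this by a dichotomy: if $G$ is sufficiently pseudorandom, transplant the first-moment (counting) calculation behind part~(ii) of Theorem~\ref{thm:1}, weakening its hypothesis from full randomness to large minimum degree and accepting the worse constant; if $G$ is structured (a small $\mathbb{F}_2$-rank of $M$, an unbalanced codegree, or a sparse cut), build the overlapping subspace by hand. The hard part, and the source of the constant, is the uniform quantitative estimate that forces the overlap for every dense graph once $|\overline W|\ge 0.618\,n$: the generic dimension bound is exactly tight at $\tfrac12$, so one must extract a linear-in-$n$ saving purely from the symmetry of $M$ together with the freedom to choose $W$, and optimizing this trade-off is what should pin the threshold at $0.382=1/\varphi^{2}$ (equivalently $0.618=1/\varphi$).
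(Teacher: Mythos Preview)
Your write-up is not yet a proof of Theorem~\ref{thm:2}: the only step you actually carry out beyond the trivial degree bound is the dimension count giving $f(G)\le\lfloor n/2\rfloor+1$. Everything after that is a plan (``I would attempt this by a dichotomy\ldots''), and you explicitly identify the missing ingredient --- a uniform linear saving from the symmetry of $M$ --- without supplying it. There is no reason to expect the symmetry of $M$ alone to force $\dim(V_{\overline W}+MV_{\overline W})<n$ for some coordinate set of size $0.618\,n$; symmetric matrices over $\mathbb{F}_2$ can be generic enough that this fails. Your pseudorandom/structured dichotomy is also only a sketch: the first-moment calculation behind Theorem~\ref{thm:1}(ii) uses independence of edges, not merely large minimum degree, and there is no indicated substitute. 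Finally, the guess that the threshold is $1/\varphi^{2}$ is a numerical coincidence; nothing in the argument points to the golden ratio.

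The paper's proof is entirely different and avoids linear algebra over $\mathbb{F}_2$. Fix $a=\alpha n$ and consider the $\binom{n}{a}$ binary vectors $B(A)$, one for each $a$-set $A$. If some two of them are within Hamming distance $d=\delta n$, then taking $A''=A\bigtriangleup A'$ one checks that the odd-degree vertices outside $A''$ lie in $(B(A)\bigtriangleup B(A'))\cup(A\cap A')$, whence $f(G)\le 2a+d=(2\alpha+\delta)n$. So the question becomes: how large can a binary code of length $n$ and minimum distance $\delta n$ be? The elementary sphere-packing (Hamming) bound already gives two close $B(A)$'s once $H(\alpha)+H(\delta/2)>1$, which after optimising yields $f(n)\le(0.440\ldots+o(1))n$. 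The improvement to $0.382$ comes from replacing Hamming by the sharper linear-programming (MRRW) upper bound on codes, inequality~(5.4.1) in van~Lint; with $\alpha=0.0535$ and $\delta=0.275$ one gets $2\alpha+\delta=0.382$. The constant is thus an artefact of the best known asymptotic code bound, not of any algebraic structure like $1/\varphi^{2}$.
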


\begin{figure}
\centering
\begin{picture}(0,0)%
\includegraphics{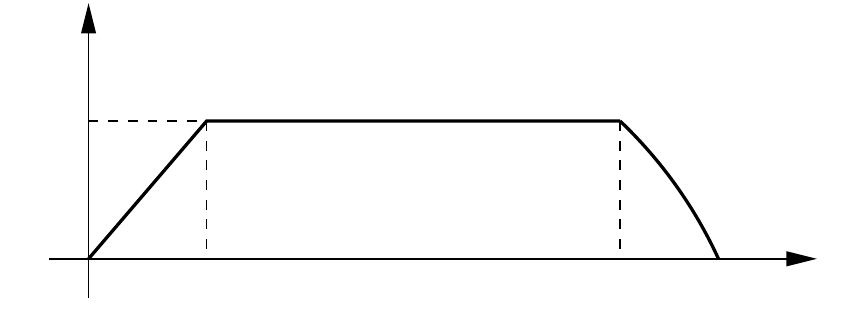}%
\end{picture}%
\setlength{\unitlength}{4144sp}%
\begingroup\makeatletter\ifx\SetFigFont\undefined%
\gdef\SetFigFont#1#2#3#4#5{%
  \reset@font\fontsize{#1}{#2pt}%
  \fontfamily{#3}\fontseries{#4}\fontshape{#5}%
  \selectfont}%
\fi\endgroup%
\begin{picture}(3921,1411)(4816,-5690)
\put(8461,-5641){\makebox(0,0)[lb]{\smash{{\SetFigFont{12}{14.4}{\rmdefault}{\mddefault}{\updefault}{\color[rgb]{0,0,0}$p$}%
}}}}
\put(8056,-5641){\makebox(0,0)[lb]{\smash{{\SetFigFont{12}{14.4}{\rmdefault}{\mddefault}{\updefault}{\color[rgb]{0,0,0}$1$}%
}}}}
\put(7561,-5641){\makebox(0,0)[lb]{\smash{{\SetFigFont{12}{14.4}{\rmdefault}{\mddefault}{\updefault}{\color[rgb]{0,0,0}$p_0$}%
}}}}
\put(5671,-5641){\makebox(0,0)[lb]{\smash{{\SetFigFont{12}{14.4}{\rmdefault}{\mddefault}{\updefault}{\color[rgb]{0,0,0}$\lambda_0$}%
}}}}
\put(5041,-5641){\makebox(0,0)[lb]{\smash{{\SetFigFont{12}{14.4}{\rmdefault}{\mddefault}{\updefault}{\color[rgb]{0,0,0}$0$}%
}}}}
\put(4996,-4876){\makebox(0,0)[lb]{\smash{{\SetFigFont{12}{14.4}{\rmdefault}{\mddefault}{\updefault}{\color[rgb]{0,0,0}$\lambda_0$}%
}}}}
\put(4816,-4426){\makebox(0,0)[lb]{\smash{{\SetFigFont{12}{14.4}{\rmdefault}{\mddefault}{\updefault}{\color[rgb]{0,0,0}$\hat{f}(p)$}%
}}}}
\end{picture}%
\caption{The behavior of $\hat{f}(p)=\lim_{n\to\infty}f(G_{n,p})/n$ as a function of~$p$.}\label{fig:p}
\end{figure}

In the remainder of the paper we will use a more convenient restatement of~$f(G)$. 
Observe that the order of execution of operations does not affect the final outcome.
For any $A\subset V=V(G)$,
let $B$ consist of those vertices in $V\setminus A$ that have odd number of
neighbors in $A$. Let $a=|A|$ and $b=|B|$. We want to minimize $a+b$ over all
non-empty $A\subset V(G)$. 
The vertices of $A$ do an $O_1/O_3$ operation, depending on the even/odd parity of their neighborhood in~$A$.
The vertices in $B$ then do an $O_2$-operation to change back to $+1$.

\section{Random Graphs for $p=1/2$}\label{sec:random}
Here we prove a special case of Theorem~\ref{thm:1} when $p=1/2$. This case is somewhat easier to handle.

Let $G=G_{n,1/2}$ be a binomial random graph. First we find a lower bound on $f(G)$. If we choose a non-empty
$A\subset V$ and then generate $G$, then the distribution of $b$ is binomial with parameters $n-a$ and $1/2$, which we denote here by
$Bin(n-a,1/2)$. 
Hence, if  $l$ is such that
\begin{equation}\label{eq:a}
\sum_{a=1}^{l-1} \binom{n}{a}\Pr\left(Bin(n-a,1/2)\le l-1-a\right) =o(1),
\end{equation}
then asymptotically almost surely the diagonal distance of $G$ is at least $l$.

Let $\lambda=l/n$ and $\alpha=a/n$. We can approximate the summand in~\eqref{eq:a}
by
\begin{equation}\label{eq:summand}
2^{n\left(H(\alpha)+(1-\alpha)\left(H\left(\frac{\lambda-\alpha}{1-\alpha}\right)-1\right) +O(\log n/n)\right)},
\end{equation}
where $H$ is the binary entropy function defined as $H(p)=-p\log_2{p}-(1-p)\log_2(1-p)$. For more information about  the entropy function and 
its 
properties  see, \textit{e.g.},~\cite{AS}.
Let 
\begin{equation}\label{defg}
g_\l(\alpha) = H(\alpha)+(1-\alpha)\left(H\left(\frac{\lambda-\alpha}{1-\alpha}\right)-1\right).
\end{equation}
The maximum of $g_\l(\alpha)$ is attained exactly for $\alpha=2\lambda/3$, since
$$g_\l'(\alpha)=\log_2 \frac{2(\lambda-\alpha)}{\alpha}.$$
Now the function 
\begin{equation}\label{defh}
h(\lambda)=g_\l(2\l/3)
\end{equation} 
is concave on $\lambda\in[0,1]$ since 
$$h''(\lambda)=\frac{1}{(\lambda-1)\lambda\log 2}<0.$$ 
Moreover,  
observe that $h(0)=-1$ and $h(1)=H(2/3)-1/3>0$. Thus the equation $h(\l)=0$ has a unique solution $\l_0$ and one can compute that
\begin{equation}\label{const:lambda}
\lambda_0 = 0.1892896249152306\ldots
\end{equation}

Therefore, if $\lambda=\lambda_0-K\log n/n$ for large enough $K>0$, then the left hand side of~\eqref{eq:a} goes to zero and similarly for 
$\lambda=\lambda_0+K\log n/n$ it goes to infinity. In particular,  $f(G)> (\lambda_0-o(1))n$ asymptotically almost surely.

Let us show that this constant $\lambda_0$ is best possible, \textit{i.e.}, asymptotically almost surely $f(G)<(\lambda_0+K\log n/n)n$.
Let $\lambda=\lambda_0+K\log n/n$, $n$ be large, and $l=\lambda n$. Let
$\alpha=2\lambda/3$ and $a=\lfloor \alpha n\rfloor$. We pick a random $a$-set $A\subset V$
and compute $b$. Let $X_A$ be an indicator random variable so that $X_A=1$ if and only if $b=b(A)\le l-a$. Let $X=\sum_{|A|=a} X_A$.
We succeed if $X>0$.

The expectation $E(X)=\binom{n}{a} \Pr\left(Bin(n-a,1/2)\le l-a\right)$ tends to infinity, by our choice of $\l$.
We now show that $X>0$ asymptotically almost surely by using the Chebyshev inequality.
First note that for $A\cap C \neq \emptyset$ we have 
$$Cov(X_A,X_C)=\Pr(X_A=X_C=1)-\Pr(X_A=1)\Pr(X_C=1)=0.$$ 
Indeed, if $x\in V\setminus (A\cup C)$, 
then $\Pr(x\in B(A)|X_C=1)=1/2$, since $A\setminus C\neq\emptyset$ and no adjacency between~$x$ and all vertices in~$A\setminus C$ is exposed 
by 
the event~$X_C=1$.
Similarly, if $x\in C\setminus A$, then $A\cap C\neq\emptyset$ and an adjacency between~$x$ and $A\cap C$ is independent of the 
occurrence of $X_C=1$. This implies that 
$\Pr(x\in B(A)\mid X_C=1)=1/2$ as well. Thus
$\Pr(X_A=1|X_C=1)=\Pr\left(Bin(n-a,1/2)\le l-a\right) = \Pr(X_A=1)$, and consequently, $Cov(X_A,X_C)=0$.

Now consider the case when $A\cap C=\emptyset$. Let $s$ be a vertex in~$A$. Define a new indicator random variable $Y$ which takes the value~$1$ 
if 
and only if $|B(C)\setminus\{s\}|\le l-a$. Observe  that
\begin{align*}
\Pr(Y=1)&=\Pr\left(Bin(n-a-1,1/2)\le l-a\right)\\
           &\le 2\Pr\left(Bin(n-a,1/2)\le l-a\right) = 2\Pr(X_A=1).
\end{align*}
Moreover, 
$$\Pr(X_A=1|Y=1) = \Pr\left(Bin(n-a,1/2)\le l-a\right) = \Pr(X_A=1),$$
since for every $x\in V\setminus A$ the adjacency between~$x$ and~$s$ is not influenced by~$Y=1$.
Finally note that $X_C\le Y$. Thus, 
\begin{align*}
Cov(X_A,X_C)&\le \Pr(X_A=X_C=1)\le \Pr(X_A=Y=1)\\
                      &= \Pr(Y=1)\Pr(X_A=1|Y=1)\le 2\left(\Pr(X_A=1)\right)^2.
\end{align*}
Consequently,
\begin{align*}
Var(X) &= E(X) + \sum_{A\cap C\neq\emptyset, A\neq C} Cov(X_A,X_C) + \sum_{A\cap C = \emptyset} Cov(X_A,X_C)\\
         &\le E(X) +  2\sum_{A\cap C = \emptyset} \left(\Pr(X_A=1)\right)^2\\
         &= E(X) + 2\binom{n}{a}\binom{n-a}{a} \left(\Pr(X_A=1)\right)^2 = o\left(E(X)^2\right),
\end{align*}
as $E(X) = \binom{n}{a} \Pr(X_A=1)$ tends to infinity and $\binom{n-a}{a}=o\left(\binom{n}{a}\right)$. Hence, Chebyshev's inequality yields that 
$X>0$ asymptotically almost surely.

\begin{rem} 
A version of the well-known Gilbert-Varshamov bound (see, \textit{e.g.},~\cite{Lint}) states that if 
\begin{equation}\label{eq:gv}
2^{-n}\sum_{i=1}^{l-1} \binom{n}{i}3^i <1,
\end{equation}
then $f(n)\ge l$. Observe that this is consistent with bound~\eqref{eq:a}. 
Let $\lambda=l/n$. We can approximate the left hand side of~\eqref{eq:gv} by
$$
2^{n\left( H(\lambda) + \lambda\log_2{3} -1 +o(1)  \right)}.
$$
One can check after some computation that 
$$
H(\lambda) + \lambda\log_2{3} -1 = g_{\lambda}(2\lambda/3).
$$
Therefore, \eqref{eq:a} and~\eqref{eq:gv} give asymptotically the same lower bound on~$f(n)$.
\end{rem}

\section{Random Graphs for Arbitrary $p$}
Let $G=G_{n,p}$ be a random graph with constant $p\in (0,1)$.

Observe that for a fixed set~$A\subset V$, $|A|=a$, the probability that a vertex from $V\setminus A$ belongs to~$B(A)$ is 
$$
p(a) = \sum_{0\le i<\frac{a}{2}} \binom{a}{2i+1} p^{2i+1} (1-p)^{a-(2i+1)}
   = \frac{1-(1-2p)^a}{2}.
$$
(If this is unfamiliar, expand $(1-2p)^n$ as $((1-p)-p)^n$ and compare).

\subsection{$0<p< \lambda_0$}\label{sec:case1}
For $p<\lambda_0$ we begin with the upper bound $f(G)\le \delta(G)+1$, see \eqref{trivial}. For the lower bound it is enough to show that 
\begin{equation}\label{small}
\sum_{2\le a\le pn} \binom{n}{a}\Pr\left(Bin(n-a,p(a))\le pn-a\right) =o(1),
\end{equation}
since $\delta(G)+1 \leq np$ 
asymptotically almost surely. (We may assume that $p=\Omega\left(\frac{\log n}{n}\right)$; for otherwise $\delta(G)=0$ with high probability and the theorem is trivially true.) This implies that if $|A|+|B| \le pn$, then $|A|=1$. 

\subsubsection{$p$ Constant}
We split this sum into two sums for $2\le a\le \sqrt{n}$ and $\sqrt{n}<a\le pn$, respectively.
Let $X=Bin(n-a,p(a))$ and  
\begin{equation}\label{defeps}
\eps =1-\frac{pn-a}{(n-a)p(a)} \ge 1-\frac{p}{p(2)} = 1-\frac{1}{2-2p}>0.
\end{equation}
Thus, by Chernoff's bound, 
\begin{equation}\label{Chernoff}
\Pr(Bin(N,\r)\leq (1-\theta)N\r)\leq e^{-\theta^2N\r/2}
\end{equation}
we see that
\begin{align*}
\Pr\left(Bin(n-a,p(a))\le pn-a\right)     &= \Pr\left( X\le (1-\eps)E(X) \right)\\
                                       &\le \exp\{-\eps^2E(X)/2\}\\
                                       &= \exp\{ -\Theta(n)\},
\end{align*}
and consequently,
\begin{align*}
\sum_{2\le a < \sqrt{n}} \binom{n}{a}\Pr\left(Bin(n-a,p(a))\le pn-a\right) &\le \sqrt{n} \binom{n}{\sqrt{n}} \exp\{ -\Theta(n)\}\\ 
                                                     &\le \exp \{ O(\sqrt{n}\log n)\} \exp\{-\Theta(n)\}\\
                                                                                              & =o(1).
\end{align*}
Now we bound the second sum corresponding to $\sqrt{n}<a\le pn$. Note that  
\begin{align*}\sum_{\sqrt{n}\le a \le pn} \binom{n}{a}&\Pr\left(Bin(n-a,p(a))\le pn-a\right)\\
&=\sum_{\sqrt{n}\le a \le pn} \binom{n}{a}\Pr\left(Bin\left(n-a,\frac12+O(e^{-\Omega(n^{1/2})})\right)\le pn-a\right)\\
&\leq n2^{nh(p)+o(1)}=o(1).
\end{align*}
Here $h$ is defined in \eqref{defh} and the right hand limit is zero since $p < \lambda_0$.

\subsubsection{$p=o(1)$}\label{sec:p_little_1}
We follow basically the same strategy as above and show that \eqref{small} holds for large $a$
and something similar when $a$ is small. Suppose then that $p=1/\om$ where $\om=\om(n)\to\infty$.
First consider those $a$ for which $ap\geq 1/\om^{1/2}$. In this case $p(a)\geq (1-e^{-2ap})/2$.
Thus,
\begin{multline*}
\sum_{\substack{ap\geq 1/\om^{1/2}\\a\leq np}}\binom{n}{a} \Pr\left(Bin(n-a,p(a))\le pn-a\right)\\
=\sum_{\substack{ap\geq 1/\om^{1/2}\\a\leq np}}e^{O(n\log\om/\om)}  e^{-\Omega(n/\om^{1/2})}=o(1).
\end{multline*}
If $ap\leq 1/\om^{1/2}$ then $p(a)= ap(1+O(ap))$. 
Then
\begin{multline}\label{a}
\sum_{\substack{ap<1/\om^{1/2}\\2\leq a\leq np}}\binom{n}{a} \Pr\left(Bin(n-a,p(a))\le pn-a\right)\\
\leq \sum_{\substack{ap<1/\om^{1/2}\\2\leq a\leq np}}\brac{\frac{ne}{a}e^{-np/10}}^a=o(1)
\end{multline}
provided $np\geq 11\log n$.

If $np\leq \log n-\log\log n$ then $G=G_{n,p}$ has isolated vertices asymptotically almost surely
and then $f(G)=1$. So we are left with the case where $\log n-\log\log n\leq np\leq 11\log n$.

We next observe that if there is a set $A$ for which
$2\leq |A|$ and $|A|+|B(A)|\leq np$ then there is a minimal size such set. Let $H_A=(A,E_A)$ be a graph
with vertex set $A$ and an edge $(v,w)\in E_A$ if and only if $v,w$ have a common neighbor in $G$. $H_A$ must be 
connected, else $A$ is not minimal. So we can find $t\leq a-1$ vertices $T$ such that $A\cup T$ spans 
at least $t+a-1$ edges between~$A$ and~$T$. Thus we can replace the estimate \eqref{a} by
\begin{align*}
\sum_{\substack{ap<1/\om^{1/2}\\2\leq a\leq np}}\sum_{t=1}^{a-1}&\binom{n}{a}\binom{n}{t}\binom{ta}{t+a-1}p^{t+a-1}
\Pr\left(Bin(n-a-t,p(a))\le pn-a\right)\\
&\leq\sum_{\substack{ap<1/\om^{1/2}\\2\leq a\leq np}}\sum_{t=1}^{a-1}\bfrac{ne}{a}^a\bfrac{ne}{t}^t\bfrac{taep}{t+a-1}^{t+a-1}e^{-anp/10}\\
&\leq \frac{1}{e^2np}\sum_{\substack{ap<1/\om^{1/2}\\2\leq a\leq np}}a\brac{(e^2np)^2e^{-np/10}}^a=o(1).
\end{align*}

\subsection{$p_0 < p<1$}
First let us define the constant $p_0$.  Let 
\begin{equation}\label{const:p}
p_0\approx 0.8941512242051071\ldots
\end{equation}
be a root of $2p-2p^2=\lambda_0$.
For the upper bound let $A=\{x,y\}$, where $x$ and $y$ satisfy $|N(x)\bigtriangleup N(y)| \le |N(x')\bigtriangleup N(y')|$ for any $x',y'\in 
V(G)$. Then $B=B(A) = N(x)\bigtriangleup N(y)$, and thus, asymptotically almost surely $|B|\leq (2p-2p^2)n$ plus a negligible error term~$o(n)$. (We may assume that $1-p=\Omega\left(\frac{\log n}{n}\right)$; for otherwise  we have two vertices of degree~$n-1$ with high probability, and hence, $f(G)$=2.)

To show the lower bound it is enough to prove that 
\begin{equation*}
\sum_{3\le a\le (2p-2p^2)n} \binom{n}{a}\Pr\left(Bin(n-a,p(a))\le (2p-2p^2)n-a\right)=o(1).
\end{equation*}
Indeed, this implies that if $|A|+|B| \le (2p-2p^2)n$, then $|A|=1$ or~$2$. But if $|A|=1$, then in a typical graph $|B| = (p+o(1))n >
(2p-2p^2)n$ since $p>1/2$. 

\subsubsection{$p$ Constant}
As in the previous section we split the sum into two sums for $3\le a\le \sqrt{n}$ and $\sqrt{n}<a\le pn$, respectively. Let 
$$\eps=1-\frac{(2p-2p^2)n-a}{(n-a)p(a)}\geq  1-\frac{2p-2p^2}{p(a)} >0.$$ 
To confirm the second inequality we have to consider two cases. The first one is for~$a$ odd and at least~$3$. Here,
$$1-\frac{2p-2p^2}{p(a)} > 1-\frac{2p-2p^2}{1/2} = (2p-1)^2 > 0.$$
The second case, for~$a$ even and at least~$4$, gives
$$1-\frac{2p-2p^2}{p(a)} > 1-\frac{2p-2p^2}{p(2)} = 0.$$
Now one can apply Chernoff bounds with the given~$\eps$ to show that 
$$
\sum_{3\le a < \sqrt{n}} \binom{n}{a}\Pr\left(Bin(n-a,p(a))\le (2p-2p^2)n-a\right) =o(1).
$$
Now we bound the second sum corresponding to $\sqrt{n}<a\le (2p-2p^2)n$. Note that  
\begin{align*}&\sum_{\sqrt{n}\le a \le (2p-2p^2)n} \binom{n}{a}\Pr\left(Bin(n-a,p(a))\le (2p-2p^2)n-a\right)\\
&=\sum_{\sqrt{n}\le a \le (2p-2p^2)n} \binom{n}{a}\Pr\left(Bin\left(n-a,\frac12+O(e^{-\Omega(n^{1/2})})\right)\le (2p-2p^2)n-a\right)\\
&\leq n2^{nh(2p-2p^2)+o(1)}=o(1)
\end{align*}
since $p>p_0$ implies that $2p-2p^2<\l_0$.

\subsubsection{$p=1-o(1)$}
One can check it by following the same strategy as above and in Section~\ref{sec:p_little_1}.

\subsection{$\lambda_0 \le p \le p_0$}
Let $\alpha =2\lambda_0/3$, $a=\lfloor \alpha n\rfloor$. Fix an $a$-set $A\subset V$ and generate
our random graph and determine $B=B(A)$ with $b=|B|$. Let $\e=(\log n)^4/\sqrt{n}$ and let $X_A$ be the indicator 
random variable for $a+b\le (\lambda_0+\e)n$ and $X=\sum_A X_A$. 
Then 
$$p(a)=\frac12+e^{-\Omega(n)}$$ 
and with $g_\l(\alpha)$ as defined in \eqref{defg},
\begin{equation}\label{EX}
E(X)=\exp\{(g_{\l_0+\eps}(2\l_0/3)+o(1))n\log 2\}.
\end{equation}
Now 
\begin{eqnarray*}
g_{\l+\eps}(\alpha)&=&g_{\l}(\alpha)+(1-\alpha)\brac{H\bfrac{\l+\eps-\a}{1-\a}-H\bfrac{\l-\a}{1-\a}}\\
&=&g_{\l}(\alpha)+\eps \log_2\bfrac{1-\l}{\l-\a}+O(\e^2).
\end{eqnarray*}
Plugging this into \eqref{EX} with $\l=\l_0$ and $\a=2\l_0/3$ we see that
\begin{equation}\label{EX2}
E(X)=\exp\set{\brac{\eps\log_2\bfrac{1-\l_0}{\l_0/3}+O(\e^2)}n\log 2}=e^{\Omega((\log n)^4n^{1/2})}.
\end{equation}

Next, we estimate the variance of $X$. We will argue that for $A,C\in \binom{V}{a}$ either
$|A\bigtriangleup C|$ is small (but the number of such pairs is small) or
$|A\bigtriangleup C|$ is large (but then the covariance $Cov(X_A,X_C)$ is very
small since if we fix the adjacency of some vertex $x$ to $C$, then the parity of
$|N(x)\cap (A\setminus C)|$ is almost a fair coin flip). Formally,  
$$
\begin{array}{rcl}
Var(X) &= E(X) &+ \quad\sum_{A\neq C} Cov(X_A, X_C) \\
         &\le E(X) &+ \quad\sum_{|A\bigtriangleup C|<2\sqrt{n}} \Pr(X_A=X_C=1)\\ 
         & &+    \quad\sum_{|A\bigtriangleup C|\ge 2\sqrt{n}, |A\cap C|\ge \sqrt{n}} Cov(X_A, X_C)\\
         & &+    \quad\sum_{|A\cap C|< \sqrt{n}} \Pr(X_A=X_C=1).
\end{array} 
$$
Since $E(X)$ goes to infinity, clearly $E(X)=o(E(X)^2)$. We show in Claims~\ref{clm:1}, \ref{clm:2} and~\ref{clm:3} that the remaining part is also bounded by $o(E(X)^2)$. 
Then 
Chebyshev's inequality will imply that $X>0$ asymptotically almost surely.

\begin{clm}\label{clm:1}
$\sum_{|A\bigtriangleup C|<2\sqrt{n}} \Pr(X_A=X_C=1)=o(E(X)^2)$
\end{clm}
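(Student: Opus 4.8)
The plan is to exploit the fact that the pairs $A,C$ with $|A\bigtriangleup C|<2\sqrt{n}$ are so few that the trivial bound $\Pr(X_A=X_C=1)\le \Pr(X_A=1)$ already suffices, with no need to analyze the joint distribution. By symmetry every $A\in\binom{V}{a}$ has the same marginal $\Pr(X_A=1)=E(X)/\binom{n}{a}$. First I would write
$$\sum_{|A\bigtriangleup C|<2\sqrt{n}}\Pr(X_A=X_C=1)\le \frac{E(X)}{\binom{n}{a}}\cdot N,$$
where $N$ is the number of pairs $(A,C)$ with $A,C\in\binom{V}{a}$ and $|A\bigtriangleup C|<2\sqrt{n}$, and then bound $N$.

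To count $N$, observe that since $|A|=|C|=a$ we have $|A\bigtriangleup C|=2|A\setminus C|$, so $|A\bigtriangleup C|<2\sqrt{n}$ is the same as $|A\setminus C|=j$ for some $j<\sqrt{n}$. Given $A$, a set $C$ at distance $j$ is obtained by deleting $j$ elements of $A$ and adding $j$ elements of $V\setminus A$, so there are exactly $\binom{a}{j}\binom{n-a}{j}$ such $C$. Hence
$$N=\binom{n}{a}\sum_{0\le j<\sqrt{n}}\binom{a}{j}\binom{n-a}{j}\le \binom{n}{a}\cdot\sqrt{n}\cdot\binom{n}{\lceil\sqrt{n}\rceil}^2,$$
and using $\binom{n}{\sqrt{n}}\le n^{\sqrt{n}}=e^{\sqrt{n}\log n}$ gives $N\le \binom{n}{a}\,e^{O(\sqrt{n}\log n)}$. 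Here one uses that the per-term product $\binom{a}{j}\binom{n-a}{j}\le\binom{n}{j}^2\le n^{2j}$ is increasing in $j$ on the range $j<\sqrt{n}$ (the ratio of consecutive terms is $(a-j)(n-a-j)/(j+1)^2\gg1$ since $a,n-a=\Theta(n)$), so it is maximized near the top of the range.

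Combining the two displays, the left-hand side of the claim is at most $E(X)\,e^{O(\sqrt{n}\log n)}$, so it suffices to check that $e^{O(\sqrt{n}\log n)}=o(E(X))$. This is exactly where the choice $\eps=(\log n)^4/\sqrt{n}$ pays off: by \eqref{EX2} we have $E(X)=e^{\Omega((\log n)^4 n^{1/2})}$, and since the exponent $(\log n)^4 n^{1/2}$ beats $n^{1/2}\log n$ by a factor $(\log n)^3\to\infty$, indeed $e^{O(\sqrt{n}\log n)}=o(E(X))$, whence the sum is $o(E(X)^2)$. The only real obstacle is bookkeeping rather than any genuine difficulty: one must confirm that the crude bound on $N$ is uniform over $j<\sqrt{n}$, and that the gain $(\log n)^3$ in the exponent of $E(X)$ genuinely outweighs the loss from counting the nearby pairs.
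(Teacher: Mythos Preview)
Your proof is correct and follows essentially the same approach as the paper: bound $\Pr(X_A=X_C=1)\le\Pr(X_A=1)$, count the pairs with $|A\bigtriangleup C|<2\sqrt{n}$ as $\binom{n}{a}\sum_{j<\sqrt{n}}\binom{a}{j}\binom{n-a}{j}\le\binom{n}{a}\,2^{O(\sqrt{n}\log n)}$, and finish with \eqref{EX2}. The paper's write-up is terser (it does not spell out the monotonicity of the summand), but the argument is identical.
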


\begin{proof}
We estimate trivially $\Pr(X_A=X_C=1)\le \Pr(X_A=1)$. Then,
\begin{align*}
\sum_{|A\bigtriangleup C|<2\sqrt{n}}\Pr(X_A=1) &= \binom{n}{a} \sum_{0\le i< \sqrt{n}} \binom{n-a}{i} \binom{a}{a-i} \Pr(X_A=1)\\
                 &= E(X) \sum_{0\le i< \sqrt{n}} \binom{n-a}{i} \binom{a}{a-i} \\
                 &\le E(X)\ 2^{O(\sqrt{n}\log n)}.
\end{align*}
Thus, \eqref{EX2} yields that $\sum_{|A\bigtriangleup C|<2\sqrt{n}} \Pr(X_A=X_C=1)=o(E(X)^2)$.
\end{proof}

\begin{clm}\label{clm:2}
$\sum_{|A\bigtriangleup C|\ge 2\sqrt{n}, |A\cap C|\ge \sqrt{n}} Cov(X_A, X_C) = o(E(X)^2)$
\end{clm}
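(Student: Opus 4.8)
The plan is to prove the uniform bound $Cov(X_A,X_C)\le o(1)\cdot\Pr(X_A=1)\Pr(X_C=1)$, with an $o(1)$ that does not depend on the pair, over all $A,C$ with $j:=|A\cap C|\ge\sqrt n$ and $|A\bigtriangleup C|\ge 2\sqrt n$. Since $\sum_{A,C}\Pr(X_A=1)\Pr(X_C=1)=\brac{\sum_A\Pr(X_A=1)}^2=E(X)^2$, summing this bound over the relevant pairs immediately yields $o(E(X)^2)$. Everything therefore reduces to an asymptotic independence statement for $X_A$ and $X_C$ when both the overlap $j$ and the side differences $|A\setminus C|=|C\setminus A|=a-j$ are at least $\sqrt n$. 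Throughout write $R=V\setminus(A\cup C)$ and $m=(\lambda_0+\e)n-a$, so that $X_A=1$ iff $|B(A)|\le m$.

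First I would introduce a conditioning that renders $X_A$ and $X_C$ conditionally independent. Reveal every edge of $G$ except those joining $R$ to $A\setminus C$ and those joining $R$ to $C\setminus A$, and call the resulting $\sigma$-field $\mathcal G$. For $x\in C\setminus A$ the parity $d_A(x)\bmod 2$ is already fixed by $\mathcal G$ (it uses only edges incident to $A\cap C$ together with the cross edges between $A\setminus C$ and $C\setminus A$), so $|B(A)\cap(C\setminus A)|$ equals a constant $c_A$ given $\mathcal G$; the only undetermined part of $B(A)$ is $\sum_{x\in R}\xi_x$ with $\xi_x=\mathbf 1[d_A(x)\text{ odd}]$ depending on the unrevealed $R$–$(A\setminus C)$ edges. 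Symmetrically $B(C)=c_C+\sum_{x\in R}\eta_x$, where $\eta_x$ is governed by the disjoint $R$–$(C\setminus A)$ edges. As these two edge families are disjoint, $X_A\perp X_C$ given $\mathcal G$, and hence $Cov(X_A,X_C)=Cov_{\mathcal G}(\phi_A,\phi_C)$ with $\phi_A=\Pr(X_A=1\mid\mathcal G)$ and $\phi_C=\Pr(X_C=1\mid\mathcal G)$.

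Next I would isolate the dependence on the cross edges. Because $a-j\ge\sqrt n$, for $x\in R$ one has $\Pr(\xi_x\text{ odd}\mid\mathcal G)=\tfrac12+O(|1-2p|^{a-j})=\tfrac12+e^{-\Omega(\sqrt n)}$, so up to a global $(1+o(1))$ factor $\phi_A$ equals the function $\psi(c_A):=\Pr\brac{Bin(|R|,\tfrac12)\le m-c_A}$ of $c_A$ alone, and likewise $\phi_C=\psi(c_C)$. Now condition further on the cross-edge set $M$ between $A\setminus C$ and $C\setminus A$. Given $M$, the counts $c_A$ and $c_C$ become independent, since their only remaining randomness is the star-parities to $A\cap C$, which use disjoint edges; the inner covariance therefore vanishes and $Cov_{\mathcal G}(\phi_A,\phi_C)=Cov_M\brac{\Psi_A,\Psi_C}$ with $\Psi_A(M)=E[\psi(c_A)\mid M]$. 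Finally, because $j\ge\sqrt n$ the star-parities are fair up to $e^{-\Omega(\sqrt n)}$, so the conditional mean of $c_A$ moves with $M$ by at most $O\brac{n\,|1-2p|^{j}}=o(1)$; feeding this $o(1)$ shift into the smooth lower tail $\psi$ gives $\Psi_A(M)=(1+o(1))\Pr(X_A=1)$ uniformly in $M$, and similarly for $C$. Since $E_M[\Psi_A]=\Pr(X_A=1)$, this yields $Cov_M(\Psi_A,\Psi_C)=o\brac{\Pr(X_A=1)\Pr(X_C=1)}$, as needed.

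The main obstacle is this final estimate. The function $\psi$ lives in the large-deviation lower tail (the threshold $m-c_A$ is well below the mean $|R|/2$), so it decreases by a constant factor per unit increase of its argument; consequently merely bounding the covariance of $c_A$ and $c_C$ is not enough, and one must control $\Psi_A(M)$ multiplicatively. The delicate point to verify is that the per-$M$ shift $O\brac{n\,|1-2p|^{j}}$ of the conditional mean of $c_A$ is genuinely $o(1)$ — this uses both $j\ge\sqrt n$ and that $|1-2p|$ is bounded away from $1$ on $[\lambda_0,p_0]$ — so that ``almost a fair coin flip'' upgrades to $\Psi_A(M)=(1+o(1))\Pr(X_A=1)$; the analogous $e^{-\Omega(\sqrt n)}$ biases in the $R$-sum are harmless and get absorbed into the global $(1+o(1))$ factor.
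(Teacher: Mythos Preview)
Your approach is correct and reaches the right conclusion, but it is considerably more elaborate than the paper's. The paper argues in one stroke that $\Pr(X_A=1\mid X_C=1)=2^{o(1)}\Pr(X_A=1)$: for each $x\in V\setminus A$ there is a set of at least $\sqrt n$ ``fresh'' edges---those from $x$ to $A\setminus C$ when $x\in R$, and those from $x$ to $A\cap C$ when $x\in C\setminus A$---that are independent of the event $X_C=1$ (lying either inside $V\setminus C$ or inside $C$) and disjoint across different~$x$. Hence, conditionally on the edges between $C$ and $V\setminus C$, the indicators $\mathbf 1[x\in B(A)]$ are independent Bernoullis with parameter $\tfrac12+O(|1-2p|^{\sqrt n})=\tfrac12+o(1/n)$, and the covariance bound follows immediately. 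Your larger $\sigma$-field $\mathcal G$ (which in addition reveals the $(C\setminus A)$--$(A\cap C)$ and the cross edges) buys you conditional independence of $X_A$ and $X_C$, but at the price of freezing $c_A$ and $c_C$; this is what forces your second layer of averaging over $M$. Both routes work; the paper's is shorter because it does not insist on mutual conditional independence and works directly with the one-sided conditional probability.

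One remark on your final paragraph: the inference ``the conditional mean of $c_A$ shifts by $o(1)$, hence $\Psi_A(M)=(1+o(1))\Pr(X_A=1)$'' is not valid as phrased, for exactly the reason you yourself flag---$\psi$ changes by a constant factor per unit in this large-deviation regime, so controlling only the mean of $c_A$ is insufficient. The correct argument is the pointwise one you already used for $\phi_A$: given $M$, each of the $a-j$ Bernoulli parameters for $c_A$ lies in $[\tfrac12-\delta',\tfrac12+\delta']$ with $(a-j)\delta'\le n|1-2p|^{\sqrt n}=o(1)$, so the conditional law of $c_A$ matches $Bin(a-j,\tfrac12)$ atom-by-atom up to a factor $1+o(1)$; convolving with $Bin(|R|,\tfrac12)$ then gives $\Psi_A(M)=(1+o(1))\Pr\bigl(Bin(n-a,\tfrac12)\le m\bigr)$ uniformly in $M$, which is what you need.
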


\begin{proof}
If $x\in V\setminus (A\cup C)$, then $\Pr(x\in B(A)|X_C=1)=2^{-1+o(1/n)}$, 
since we can always find at least $\sqrt{n}$ vertices in~$A\setminus C$ 
with no adjacency with~$x$ determined by the event~$X_C=1$. Similarly, if $x\in C\setminus A$, then there are at 
least $\sqrt{n}-1$ vertices in $A\cap C$ such that their adjacency with $x$ is independent of the occurrence of $X_C=1$. This implies that 
$$\Pr(X_A=1|X_C=1)= \sum_{0\le i\le l-a} \binom{n-a}{i} 2^{-(n-a)+o(1)} = 2^{o(1)}\Pr(X_A=1),$$
and consequently, $Cov(X_A,X_C) = o\left(\Pr(X_A=1)^2\right)$. Hence,
\begin{align*}
\sum_{|A\bigtriangleup C|\ge 2\sqrt{n}, |A\cap C|\ge \sqrt{n}} Cov(X_A, X_C) \le \binom{n}{a}^2 o\left(\Pr(X_A=1)^2\right) =  o(E(X)^2).
\end{align*}
\end{proof}

\begin{clm}\label{clm:3}
$\sum_{|A\cap C|< \sqrt{n}} \Pr(X_A=X_C=1) = o(E(X)^2)$
\end{clm}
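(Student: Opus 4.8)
\textbf{Proof proposal for Claim~\ref{clm:3}.}

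The plan is to combine two facts: that $X_A$ and $X_C$ are nearly independent once $|A\cap C|$ is small, and that pairs with small intersection are exponentially rare. Throughout write $a=|A|=|C|=\lfloor 2\lambda_0 n/3\rfloor$, $l=(\lambda_0+\e)n$, and $q=\Pr(X_A=1)=\Pr\big(Bin(n-a,p(a))\le l-a\big)$, so that $E(X)=\binom na q$ and $p(a)=\tfrac12+e^{-\Omega(n)}$. The number of ordered pairs with $|A\cap C|=i$ is $\binom na\binom ai\binom{n-a}{a-i}$, and for $i<\sqrt n$ we have $\binom ai\le 2^{O(\sqrt n\log n)}$ while $\binom{n-a}{a-i}\le\binom{n-a}{a}$ (since $a<(n-a)/2$); hence the number of pairs in the sum is at most $2^{o(n)}\binom na\binom{n-a}{a}$. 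Because $\alpha=2\lambda_0/3<1/3$ one checks $(1-\alpha)H\!\big(\tfrac{\alpha}{1-\alpha}\big)<H(\alpha)$, which strengthens the estimate $\binom{n-a}{a}=o\big(\binom na\big)$ used in Section~\ref{sec:random} to $\binom{n-a}{a}=2^{-\Omega(n)}\binom na$; thus there are at most $2^{-\Omega(n)}\binom na^2$ pairs. It therefore suffices to prove the per-pair bound $\Pr(X_A=X_C=1)\le 2^{o(n)}q^2$, because the sum is then at most $2^{o(n)}q^2\cdot 2^{-\Omega(n)}\binom na^2=2^{-\Omega(n)}E(X)^2=o(E(X)^2)$. (Note the cruder bound $\Pr(X_A=X_C=1)\le q$ does \emph{not} suffice: here $q=2^{-\Theta(n)}$ is far smaller than the counting gain $2^{-\Omega(n)}$, so the product $q^2$ is genuinely needed.)

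To get the per-pair bound I would adapt the decoupling device used for disjoint sets in Section~\ref{sec:random}. Fix $A,C$ with $|A\cap C|<\sqrt n$ and choose $S\subseteq A\setminus C$ with $|S|=\lceil(\log n)^2\rceil$; this is possible since $|A\setminus C|=a-|A\cap C|=\Theta(n)$. Let $Y$ be the indicator of $|B(C)\setminus S|\le l-a$, i.e.\ that at most $l-a$ vertices of $V\setminus C\setminus S$ have odd degree into $C$. Deleting $S$ only decreases the count, so $X_C\le Y$ and hence $\Pr(X_A=X_C=1)\le\Pr(X_A=1,Y=1)=\Pr(Y=1)\,\Pr(X_A=1\mid Y=1)$. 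Since $|B(C)\setminus S|\sim Bin(n-a-|S|,p(a))$ and deleting $|S|$ trials costs at most a factor $(1-p(a))^{-|S|}\le 2^{(1+o(1))|S|}=2^{o(n)}$, we get $\Pr(Y=1)\le 2^{o(n)}q$.

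The heart of the matter is $\Pr(X_A=1\mid Y=1)$. The point is that $Y$ is measurable with respect to the edges between $V\setminus C\setminus S$ and $C$, none of which touches $S$, so the edges from $S$ to $V\setminus A$ are untouched by conditioning on $Y$. For each $x\in V\setminus A$ write $\mathbb 1[x\in B(A)]=\operatorname{parity}(d_{A\setminus S}(x))\oplus\operatorname{parity}(d_S(x))$; the second summand equals $1$ with probability $p(|S|)=\tfrac12\pm\tfrac12|1-2p|^{|S|}$, is independent of $Y$, and for distinct $x$ these coins are mutually independent (disjoint edge sets). Hence, conditional on $Y=1$ and on all edges avoiding $S$, the bits $\mathbb 1[x\in B(A)]$ are independent, each within $O(|1-2p|^{|S|})=n^{-\omega(1)}$ of a fair coin. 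A likelihood-ratio comparison against $Bin(n-a,\tfrac12)$ then gives $\Pr(X_A=1\mid Y=1)\le \exp\{O(n\,|1-2p|^{|S|})\}\,q=(1+o(1))q$. Multiplying the two factors yields $\Pr(X_A=X_C=1)\le 2^{o(n)}q^2$, completing the argument.

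The step I expect to be most delicate is this last one. One must take $|S|$ large enough that the per-coordinate bias $|1-2p|^{|S|}$ remains negligible after it is summed over all $n-a$ coordinates in the likelihood ratio, which forces $|S|\to\infty$; so a single vertex, as in the $p=\tfrac12$ case, no longer randomizes the coins for general $p$. At the same time $|S|$ must be small enough that deleting it inflates $\Pr(Y=1)$ by only $2^{o(n)}$. The choice $|S|=(\log n)^2$ comfortably meets both constraints, and the only genuinely quantitative input is the entropy inequality $(1-\alpha)H\!\big(\tfrac{\alpha}{1-\alpha}\big)<H(\alpha)$ at $\alpha=2\lambda_0/3$, which supplies the exponential gap $\binom{n-a}{a}=2^{-\Omega(n)}\binom na$ on which the whole estimate ultimately rests.
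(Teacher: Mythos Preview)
Your argument is correct and follows essentially the same route as the paper's proof: count pairs via the entropy inequality $(1-\alpha)H\!\big(\tfrac{\alpha}{1-\alpha}\big)<H(\alpha)$, then decouple $X_A$ from $X_C$ by reserving a set $S\subseteq A\setminus C$ whose edges to $V\setminus A$ are untouched by the surrogate event $Y$. The only difference is cosmetic: the paper takes $|S|=\lfloor\sqrt n\rfloor$ (yielding $\Pr(X_A=X_C=1)\le 2^{\sqrt n+o(1)}q^2$), whereas you take $|S|=\lceil(\log n)^2\rceil$; either choice is absorbed by the $2^{-\Omega(n)}$ gap from the counting step.
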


\begin{proof}
First let us estimate the number of ordered pairs $(A,C)$ for which $|A\cap C|<\sqrt{n}$. Note,
\begin{align}\label{eq:clm3:1}
\sum_{|A\cap C|<\sqrt{n}} {1}  &= \binom{n}{a} \sum_{0\le i<\sqrt{n}} \binom{n-a}{a-i}\binom{a}{i}\notag\\
                                              &\le \sqrt{n}\binom{n}{a}\binom{n-a}{a}\binom{a}{\sqrt{n}}\notag\\
                                              &= 2^{n\left(H(\alpha) + H\left( \frac{\alpha}{1-\alpha} \right)(1-\alpha) +o(1)\right)}.
\end{align}
Now we will bound $\Pr(X_A=X_C=1)$ for fixed $a$-sets $A$ and~$C$. Let $S\subset A\setminus C$ be a set of size $s=|S|=\lfloor \sqrt{n} \rfloor$. 
Define a new indicator random variable $Y$ which takes the value $1$ if and only if $|B(C)\setminus S|\le (\lambda_0+\e)n-a$. Clearly, 
$X_C\leq Y$ and
\begin{align*}
\Pr(Y=1) &= \Pr\left(Bin(n-a-s, p(a))\le (\lambda_0+\e)n-a\right)\\
       &\le 2^{s+o(1)} \sum_{0\le i\le (\lambda_0+\e)n-a} \binom{n-a}{i}2^{-(n-a)}\\
       &= 2^{s+o(1)} \Pr(X_A=1),
\end{align*}
Now if we condition on the existence or otherwise of all edges $F'$ between $C$ and $V\setminus S$ 
then if $x\in V \setminus A$
$$\Pr(x\in B(A)\mid F' \mbox{ and } F'') \in \left[\frac{1-(1-2p)^{s}}{2}, \frac{1+(1-2p)^{s}}{2}\right],$$
where $F''$ is the set of edges between~$x$ and $A\setminus S$. 
This implies that
\begin{align*}
\Pr(X_A=1|Y=1) &= \sum_{0\le i\le (\lambda_0+\e)n-a} \binom{n-a}{i}2^{-(n-a)+o(1)}\\
               &= 2^{o(1)} \Pr(X_A=1),
\end{align*}
Consequently, 
$$\Pr(X_A=X_C=1)\le \Pr(X_A=Y=1)\le 2^{\sqrt{n}+o(1)}\Pr(X_A=1)^2.$$ 
Hence, \eqref{eq:clm3:1} implies
$$
\sum_{|A\cap C|< \sqrt{n}} \Pr(X_A=X_C=1) \le 2^{n\left(H(\alpha) + H\left( \frac{\alpha}{1-\alpha} \right)(1-\alpha) +o(1)\right)} 
\Pr(X_A=1)^2.
$$
To complete the proof it is enough to note that 
$$E(X)^2=2^{n\left(2H(\alpha)+o(1)\right)} \Pr(X_A=1)^2$$ 
and 
$$2H(\alpha) > H(\alpha) + H\left( \frac{\alpha}{1-\alpha}\right)(1-\alpha).$$ 
Indeed, the last inequality follows from the strict concavity of the entropy 
function, since then
$(1-\alpha)H\left( \frac{\alpha}{1-\alpha}\right) + \alpha H(0) \le H(\alpha)$ with the equality for $\alpha=0$ only.
\end{proof}

Now we show that $f(G_{n,p})\ge (\lambda_0-\e)n$. 
We show that 
$$
\sum_{1\le a\le (\lambda_0-\e)n} \binom{n}{a} \Pr\left(Bin(n-a,p(a))\le (\lambda_0-\e)n-a\right)=o(1).
$$
As in previous sections we split this sum into two sums but this time we make the break into $1\le a\le (\log n)^2$ and $(\log n)^2<a\le 
(\lambda_0-\e)n$, respectively.
In order to estimate the first sum we use the Chernoff bounds with deviation $1-\theta$ from the mean where
$$\theta=1-\frac{(\lambda_0-\e)n-a}{(n-a)p(a)}\ge 1-\frac{\lambda_0-\e}{p(a)} \ge 1-\frac{\lambda_0-\e}{\lambda_0} =\frac{\e}{\l_0}.$$

Consequently,
\begin{align*}
\sum_{2\le a < (\log n)^2} \binom{n}{a}&\Pr\left(Bin(n-a,p(a))\le (\lambda_0-\e)n-a\right)\\ 
&\le (\log n)^2 \binom{n}{(\log n)^2} \exp\{ -\Omega(\log n)^4\}\\ 
&\le \exp \{ -\Omega(-(\log n)^4)\}=o(1).
\end{align*}
Now we bound the second sum corresponding to $\log\log n<a\le (\lambda_0-\e)n$. 
\begin{multline*}
\sum_{\log\log n\le a \le (\lambda_0-\e)n} \binom{n}{a}\Pr\left(Bin(n-a,p(a))\le (\lambda_0-\e)n-a\right)\\
=2^{n(h(\l_0-\eps)+o(1/n))}=o(1).
\end{multline*}

\section{General Graphs}\label{GG}
Here we present the proof of Theorem~\ref{thm:2}. First, we prove a weaker result $f(n)\le (0.440\ldots+o(1)) n$.

Suppose we aim at showing that $f(n)\le \lambda n$. We fix some $\alpha$ and
$\rho$ and let $a=\alpha n$ and $r=\rho n$. For each $a$-set $A$ let $R(A)$ 
consist of all sets that have Hamming distance at most~$r$ from~$B(A)$. If 
\begin{equation}\label{eq:vol}
\binom{n}{a}\sum_{i=0}^{r}\binom{n}{i}=2^{n(H(\a)+H(\r)+o(1))}>2^n,
\end{equation}
then there are $A,A'$ such that
$R(A)\cap R(A')\ni C$ is non-empty. This means that $C$ is within Hamming distance $r$ from
both $B=B(A)$ and $B'=B(A')$. Thus $|B\bigtriangleup B'|\le 2r$. 

Let all vertices in $A''=A\bigtriangleup A'$ flip their neighbors, \textit{i.e.}, execute operation $O_1$. 
The only vertices outside of~$A''$ that
can have an odd number of neighbors in $A''$ are restricted to
$(B\bigtriangleup B')\cup (A\cap A')$. Thus
\begin{equation}\label{eq:h} 
f(G)\le |A\bigtriangleup A'|+|(B\bigtriangleup B')\cup (A\cap A')|\le 2a+2r = 2n(\alpha+\rho).
\end{equation}
Consequently, we try to minimise $\alpha+\rho$ subject to $H(\alpha)+H(\rho)> 1$. Since
the entropy function is strictly concave, the optimum satisfies $\alpha=\rho$, otherwise
replacing each of $\alpha,\rho$ by $(\alpha+\rho)/2$ we strictly increase
$H(\alpha)+H(\rho)$ 
without changing the sum. 
Hence, the optimum choice is 
\begin{equation*}
\alpha=\rho\approx 0.11002786443835959\ldots
\end{equation*}
the smaller root of
$H(x)=1/2$, proving that $f(n)\le (0.440\ldots+o(1)) n$. 

In order to obtain a better constant we modify the approach taken in~\eqref{eq:vol}.
Let us take $\delta=0.275$,
$\alpha=0.0535$, $a=\lfloor\alpha n\rfloor$, $d=\lfloor \delta n\rfloor$. Look
at the collection of sets $B(A)$, $A\in {[n]\choose a}$. This gives ${n\choose
 a}= 2^{n(H(\alpha)+o(1))}$ binary $n$-vectors.

We claim that some two of these vectors are at distance at most $d$. If not,
then inequality $(5.4.1)$ in~\cite{Lint} says that
$$
H(\alpha)+o(1)\le \min\{1+g(u^2)-g(u^2+2\delta u + 2\delta) : 0\le u\le
1-2\delta\},
$$
where $g(x)=H((1-\sqrt{1-x})/2)$.  
In particular, if we take $u=1-2\delta=0.45$, we get $0.30108+o(1)\le
0.30103$, a contradiction. 

Thus, we can find two different $a$-sets $A$ and $A'$ such that
$|B(A)\bigtriangleup B(A')|\le d$. As in~\eqref{eq:h}, we can conclude
that $f(G)\le 2a+d\le (0.382+o(1))n$.

\section{Acknowledgment}
The authors would like to thank Shiang Yong Looi for suggesting this problem.

\end{document}